\newtheorem{theorem}{Theorem}
\newtheorem{lemma}{Lemma}
\newtheorem{remark}{Remark}
\begin{document}

\title{\Large On the sumsets of exceptional units in $\mathbb{Z}_n$}
\author{\large Quan-Hui Yang$^{1}$\footnote{
Emails:~yangquanhui01@163.com,~zhaoqingqing116@163.com.}
~~Qing-Qing Zhao$^{2}$}
\date{} \maketitle
 \vskip -3cm
\begin{center}
\vskip -1cm { \small
1. School of Mathematics and Statistics, Nanjing University of Information \\
Science and Technology, Nanjing 210044, China }
 \end{center}

 \begin{center}
{ \small 2. Wentian College, Hohai University,  Maanshan 243031,
China }
 \end{center}

\begin{abstract} Let $R$ be a commutative ring with $1\in R$ and
$R^{\ast}$ be the multiplicative group of its units. In 1969,
Nagell introduced the exceptional unit $u$ if both $u$ and $1-u$
belong to $R^{\ast}$. Let $\mathbb{Z}_n$ be the ring of residue
classes modulo $n$. In this paper, given an integer $k\ge 2$, we
obtain an exact formula for the number of ways to represent each
element of $ \mathbb{Z}_n$ as the sum of $k$ exceptional units.
This generalizes a recent result of J. W. Sander for the case
$k=2$.

{\it 2010 Mathematics Subject Classification:} Primary 11B13, 11L03, 11L05.

{\it Keywords and phrases:} Ring of residue classes, exceptional
units, exponential sum.

\end{abstract}

\section{Introduction}

\quad \quad Let $R$ be a commutative ring with $1\in R$ and
$R^{\ast}$ be the multiplicative group of its units. In 1969,
Nagell introduced the concept of exceptional units. A unit $u\in
R$ is called {\em exceptional} if $1-u\in R^{\ast}$. Since the
solution of many Diophantine equations can be reduced to the
solution of $ax+by=1$ with $x,y$ are units in some ring, this
means to find exceptional units in the case $a=b=1$. Hence the
exceptional units are very important for studying Diophantine
equations, such as certain cubic Diophantine equations
\cite{Nagell}, Thue equations \cite{Tzanakis}, Thue-Mahler
equations \cite{Tzanakis92}, discriminant form equations
\cite{Smart} and so on (one can refer to \cite{Niklasch},
\cite{Niklasch98}).

In 1977, by using exceptional units, Lenstra \cite{Lenstra}
introduced a method to find Euclidean number fields. After that,
many new Euclidean number fields were found (See \cite{DeaDu} and
\cite{Leutbecher}). Beyond these, exceptional units also have
connections with cyclic resultants \cite{Stewart,Stewart13} and
Lehmer's conjecture related to Mahler's measure
\cite{Silverman,Silverman96}.

Let $R^{\ast\ast}$ be the set of all exceptional units in $R$. In
this paper, we consider the ring $R=\mathbb{Z}_n$ of residue
classes mod $n$. By definition, we have
$$\mathbb{Z}_n^{\ast}=\{a\in \mathbb{Z}_n:\gcd(a,n)=1\},$$
$$\mathbb{Z}_n^{\ast\ast}=\{a\in \mathbb{Z}_n:\gcd(a,n)=1~\text{and}~\gcd(a-1,n)=1\}.$$

For a prime $p$, we write $p^a\|n$ if $p^a|n$ and $p^{a+1}\nmid
n$. Throughout this paper, we write $e(x)=e^{2\pi ix}$ and $p$
always denotes a prime. We assume that $0\le a\le n-1$ if $a\in
\mathbb{Z}_n$. In 2010, Harrington and Jones \cite{Harrington}
proved that
$$\sharp ~\mathbb{Z}_n^{\ast\ast}=n\prod_{p|n}\left(1-\frac 2p\right),$$
which also follows immediately from results of Deaconescu
\cite{Deaconescu} or Sander \cite{Sander}.

In \cite[Theorem 1.1]{Sander}, the number of representations of an
element $c\in \mathbb{Z}_n$ as the sum of two units was determined
to be
$$\varphi^{\ast}(n,c):=n\prod_{p|n,p|c}\left(1-\frac
1p\right)\prod_{p|n,p\nmid c}\left(1-\frac 2p\right).$$

For any integers $k,n,c$ with $k,n\ge 2$, let
$$\varphi_k(n,c)=\sharp\{(x_1,x_2,\ldots,x_k)\in (\mathbb{Z}_n^{\ast\ast})^k:~x_1+x_2+\cdots+x_k\equiv c \pmod n\}.$$
Recently, Sander \cite{Sander15} gave an exact formula for
$\varphi_2(n,c)$. In order to present Sander's result, we define a
function $\varphi^{\ast\ast}(n,c)$ below.

Set $\varphi^{\ast\ast}(1,c):=1$. For all integers $k\ge 1$, let
$\varphi^{\ast\ast}(2^k,c):=0$ and
$\varphi^{\ast\ast}(3^k,c):=3^{k-1}$ if $c\equiv 1~(\text{mod}~3)$
and $0$ otherwise. For all primes $p\ge 5$, let
$$\varphi^{\ast\ast}(p^k,c):=\left\{\begin{array}{l l}
p^{k-1}(p-2)& \text{if}~~c\equiv 1 ~(\rm{mod}~p), \\
p^{k-1}(p-3)& \text{if}~~c\equiv 0 ~({\rm{mod}}~p)~\text{or}~c\equiv 2 ~({\rm{mod}}~p),\\
p^{k-1}(p-4)& \text{otherwise}.
\end{array}\right.$$
Define $\varphi^{\ast\ast}(n,c)$ by multiplicative continuation
with respect to $n$.

\noindent{\bf  Sander's Theorem.} (See \cite[Theorem
1.1]{Sander15}) {\em~ Given integers $n>0$ and $c$, we have
$\varphi_2(n,c)=\varphi^{\ast\ast}(n,c)$.}

In this paper, we generalize Sander's theorem in the following.

\begin{theorem}\label{mainthm} For any integers $k,n,c$ with
$k,n\ge 2$, we have
$$\varphi_k(n,c)=(-1)^k\prod_{p^{\alpha}\|n}p^{\alpha k-\alpha-k}\left(p\sum_{\substack{j=0\\j\equiv c
~(\text{\rm mod}~p)}}^k {k \choose j}+(2-p)^k-2^k\right).$$
\end{theorem}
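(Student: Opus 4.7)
The plan is to express $\varphi_k(n,c)$ as an exponential sum and then exploit the multiplicativity coming from the Chinese Remainder Theorem to reduce to prime powers. Writing the congruence $x_1+\cdots+x_k\equiv c\pmod n$ via orthogonality of the additive characters $x\mapsto e(ax/n)$ yields
$$\varphi_k(n,c)=\frac{1}{n}\sum_{a=0}^{n-1}e\!\left(-\frac{ac}{n}\right)S(a,n)^k,$$
where $S(a,n):=\sum_{x\in\mathbb Z_n^{\ast\ast}}e(ax/n)$ is the ``exceptional-unit Ramanujan sum.'' The whole problem then reduces to evaluating $S(a,n)$ and summing the resulting expression against the characters $e(-ac/n)$.

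Because $\mathbb Z_n^{\ast\ast}\cong\prod_{p^\alpha\|n}\mathbb Z_{p^\alpha}^{\ast\ast}$ as sets under CRT, and because the additive character $x\mapsto e(ax/n)$ factorizes accordingly across the prime-power pieces, the sum $S(a,n)$ is multiplicative in $n$ and hence so is $\varphi_k(n,c)$. It therefore suffices to establish the formula when $n=p^\alpha$ is a prime power, and then to multiply the local factors across primes dividing $n$ at the end.

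For $n=p^\alpha$, the next step is to evaluate $S(a,p^\alpha)$ by inclusion--exclusion: since $\mathbb Z_{p^\alpha}^{\ast\ast}$ is precisely the set of residues that are neither $0$ nor $1$ modulo $p$,
$$S(a,p^\alpha)=\sum_{x=0}^{p^\alpha-1}e\!\left(\frac{ax}{p^\alpha}\right)-\sum_{\substack{x=0\\ p\mid x}}^{p^\alpha-1}e\!\left(\frac{ax}{p^\alpha}\right)-\sum_{\substack{x=0\\ p\mid x-1}}^{p^\alpha-1}e\!\left(\frac{ax}{p^\alpha}\right).$$
Each of the three inner sums is a geometric progression that either evaluates explicitly or vanishes. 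A brief check shows $S(a,p^\alpha)=0$ unless $p^{\alpha-1}\mid a$; writing $a=p^{\alpha-1}b$ with $0\le b<p$, one obtains $S(0,p^\alpha)=p^{\alpha-1}(p-2)$ and $S(p^{\alpha-1}b,p^\alpha)=-p^{\alpha-1}\bigl(1+e(b/p)\bigr)$ for $b\neq 0$. Thus only the $p$ values $a=p^{\alpha-1}b$ contribute to the Fourier expansion.

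Substituting these values back, the $b=0$ term contributes $p^{\alpha k-\alpha-k}(p-2)^k$. For $b\neq 0$, expanding $(1+e(b/p))^k$ by the binomial theorem and swapping the order of summation reduces the remaining piece to a sum over $j\in\{0,1,\ldots,k\}$ of $\binom{k}{j}$ times $\sum_{b=1}^{p-1}e((j-c)b/p)$; orthogonality modulo $p$ makes this inner sum equal to $p-1$ when $j\equiv c\pmod p$ and $-1$ otherwise. Collecting terms produces $p\sum_{j\equiv c(p)}\binom{k}{j}-2^k$ inside the $b\neq 0$ contribution, and combining with the $b=0$ term via the identity $(p-2)^k=(-1)^k(2-p)^k$ yields the stated prime-power formula. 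Multiplicativity then delivers the general formula. The main obstacle is careful sign bookkeeping when merging the $b=0$ and $b\neq 0$ pieces so that $(p-2)^k$ and $(2-p)^k$ are aligned under a single outer $(-1)^k$, and when keeping track of the exponents $\alpha k-\alpha-k$ as one multiplies the local factors; the analytic core (orthogonality, geometric sums, and the binomial expansion) is otherwise routine.
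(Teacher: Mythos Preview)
Your proposal is correct and follows the same overall strategy as the paper: reduce to prime powers via the Chinese Remainder Theorem, then evaluate the prime-power count by orthogonality of additive characters and the binomial expansion of $(1+e(b/p))^k$. The one organizational difference is in how the passage from $p^{\alpha}$ to $p$ is made. The paper proves a separate combinatorial lemma (its Lemma~2), establishing $\varphi_k(p^{\alpha},c)=p^{(k-1)(\alpha-1)}\varphi_k(p,c)$ by a direct bijection between solution tuples, and only afterwards carries out the exponential-sum computation over $\mathbb{Z}_p$. You instead run the Fourier argument directly on $\mathbb{Z}_{p^{\alpha}}$ and recover the factor $p^{(k-1)(\alpha-1)}$ analytically, from the vanishing of $S(a,p^{\alpha})$ whenever $p^{\alpha-1}\nmid a$. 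Your route is slightly more uniform (one computation handles all $\alpha$ at once), while the paper's route keeps the exponential-sum step confined to $\mathbb{Z}_p$ at the cost of an extra lemma; the analytic content is identical.
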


\begin{remark} Taking $k=2$, we can obtain Sander's result from
Theorem \ref{mainthm} by simple calculation.
\end{remark}

\section{Proofs}

\begin{lemma}\label{lem1} For any integers $k,n,c$ with $k,n\ge 2$, $\varphi_k(n,c)$ is multiplicative
with respect to $n$.
\end{lemma}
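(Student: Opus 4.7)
The plan is to deduce multiplicativity from the Chinese Remainder Theorem applied to the ring $\mathbb{Z}_n$. Suppose $m,n \geq 2$ with $\gcd(m,n)=1$; I aim to prove $\varphi_k(mn,c)=\varphi_k(m,c)\varphi_k(n,c)$. The ring isomorphism $\psi:\mathbb{Z}_{mn}\to\mathbb{Z}_m\times\mathbb{Z}_n$ given by $\psi(x)=(x\bmod m,\,x\bmod n)$ will be the backbone of the argument.

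First I would verify that $\psi$ restricts to a bijection between $\mathbb{Z}_{mn}^{\ast\ast}$ and $\mathbb{Z}_m^{\ast\ast}\times\mathbb{Z}_n^{\ast\ast}$. The key observation is that $\gcd(x,mn)=1$ holds if and only if $\gcd(x,m)=\gcd(x,n)=1$, and similarly $\gcd(x-1,mn)=1$ if and only if $\gcd(x-1,m)=\gcd(x-1,n)=1$. Therefore $x$ is an exceptional unit mod $mn$ if and only if both of its projections are exceptional units, which gives the required bijection.

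Next I would lift this to $k$-tuples: $\psi$ induces a bijection
\[
(\mathbb{Z}_{mn}^{\ast\ast})^k \longleftrightarrow (\mathbb{Z}_m^{\ast\ast})^k\times(\mathbb{Z}_n^{\ast\ast})^k,
\]
and because $\psi$ is a ring homomorphism, the congruence $x_1+\cdots+x_k\equiv c\pmod{mn}$ is equivalent under CRT to the simultaneous conditions $x_1+\cdots+x_k\equiv c\pmod{m}$ and $x_1+\cdots+x_k\equiv c\pmod{n}$. Hence counting $k$-tuples on the left splits as a product of the counts on the two factors, yielding $\varphi_k(mn,c)=\varphi_k(m,c)\varphi_k(n,c)$.

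There is essentially no obstacle: the only thing to watch is the double use of CRT, first to handle the exceptional-unit condition (which is a conjunction of two coprimality conditions, both respected by $\psi$) and then to handle the additive congruence defining the sum. Everything else is bookkeeping, and no estimates or exponential sums are required at this stage.
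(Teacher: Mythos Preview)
Your argument is correct and is precisely the standard CRT route the paper has in mind: the paper does not write out a proof of this lemma at all, but simply remarks that it is analogous to Theorem~1.1 in \cite{Sander} and leaves it to the reader. That cited argument is exactly the Chinese Remainder Theorem splitting you describe, so your proposal matches the intended approach.
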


The proof of Lemma \ref{lem1} is similar to that of Theorem 1.1 in
\cite{Sander} and we leave it to the readers.

\begin{lemma}\label{lem2} For any prime power $p^{\alpha}$,
$\varphi_k(p^{\alpha},c)=p^{(k-1)(\alpha-1)}\varphi_k(p,c)$.
\end{lemma}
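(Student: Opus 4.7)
The plan is to exploit the fact that membership in $\mathbb{Z}_{p^\alpha}^{\ast\ast}$ depends only on the residue mod $p$: indeed, $\gcd(x,p^\alpha)=1$ iff $p\nmid x$, and similarly $\gcd(x-1,p^\alpha)=1$ iff $p\nmid x-1$. So the reduction map $\pi:\mathbb{Z}_{p^\alpha}\to\mathbb{Z}_p$ sends $\mathbb{Z}_{p^\alpha}^{\ast\ast}$ onto $\mathbb{Z}_p^{\ast\ast}$, and each fiber of the restriction has size $p^{\alpha-1}$.

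First I would fiber the counting problem over the residues mod $p$. Given any tuple $(x_1,\ldots,x_k)\in(\mathbb{Z}_{p^\alpha}^{\ast\ast})^k$ with $x_1+\cdots+x_k\equiv c\pmod{p^\alpha}$, its reduction $(y_1,\ldots,y_k)\in(\mathbb{Z}_p^{\ast\ast})^k$ satisfies $y_1+\cdots+y_k\equiv c\pmod p$. Conversely, I would fix such a base tuple $(y_1,\ldots,y_k)$ and count the lifts.

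Next I would parametrize the lifts. Write $x_i=y_i+pz_i$ with $z_i\in\{0,1,\ldots,p^{\alpha-1}-1\}$; each such $x_i$ automatically lies in $\mathbb{Z}_{p^\alpha}^{\ast\ast}$. Setting $c-(y_1+\cdots+y_k)=pm$ (which is an integer since $y_1+\cdots+y_k\equiv c\pmod p$), the congruence $x_1+\cdots+x_k\equiv c\pmod{p^\alpha}$ becomes
\[
z_1+\cdots+z_k\equiv m\pmod{p^{\alpha-1}}.
\]
A standard free-variable count shows this linear congruence has exactly $(p^{\alpha-1})^{k-1}=p^{(k-1)(\alpha-1)}$ solutions in the box $\{0,\ldots,p^{\alpha-1}-1\}^k$ (fix $z_1,\ldots,z_{k-1}$ freely and solve uniquely for $z_k$ mod $p^{\alpha-1}$).

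Summing over all admissible base tuples $(y_1,\ldots,y_k)$ — of which there are $\varphi_k(p,c)$ by definition — yields the claimed identity $\varphi_k(p^\alpha,c)=p^{(k-1)(\alpha-1)}\varphi_k(p,c)$. No real obstacle is expected; the only point requiring a sentence of care is the observation that $\mathbb{Z}_{p^\alpha}^{\ast\ast}=\pi^{-1}(\mathbb{Z}_p^{\ast\ast})$, which is what makes every lift $y_i+pz_i$ automatically exceptional and reduces the problem to an elementary linear-congruence count.
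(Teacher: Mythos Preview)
Your proof is correct and follows essentially the same approach as the paper: both arguments rest on the observation that $\mathbb{Z}_{p^\alpha}^{\ast\ast}=\pi^{-1}(\mathbb{Z}_p^{\ast\ast})$ and then count lifts of solutions mod $p$ to solutions mod $p^\alpha$. The only cosmetic difference is that the paper indexes by the first $k-1$ coordinates (defining a set $T$ of lifted $(k-1)$-tuples and arguing a bijection with the solution set), whereas you index by the full $k$-tuple mod $p$ and count lifts via the linear congruence $z_1+\cdots+z_k\equiv m\pmod{p^{\alpha-1}}$; the underlying count is identical.
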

\begin{proof}[Proof.]
Let
\begin{eqnarray*}S&=&\{(x_1,x_2,\ldots,x_{k-1}):~x_1,x_2,\ldots,x_{k-1}\in
\mathbb{Z}_p^{\ast\ast}~\text{and there exists}\\
&& x_k\in \mathbb{Z}_p^{\ast\ast} ~\text{such
that}~x_1+x_2+\cdots+x_{k-1}+x_k\equiv c \pmod p
\}.\end{eqnarray*} Then $\varphi_k(p,c)=|S|$. Let
\begin{eqnarray*}T&=&\{(x_1+i_1p,x_2+i_2p,\ldots,x_{k-1}+i_{k-1}p):~(x_1,x_2,\ldots,x_{k-1})\in
S,\\
&&~0\le i_1,i_2,\ldots,i_{k-1}\le p^{\alpha-1}-1
\}.\end{eqnarray*} Clearly,
$|T|=p^{(k-1)(\alpha-1)}|S|=p^{(k-1)(\alpha-1)}\varphi_k(p,c)$.

Next, it suffices to prove $\varphi_k(p^{\alpha},c)=|T|$.

For any $k$-tuple $(x_1',x_2',\ldots,x_k')$ with $x_i'\in
\mathbb{Z}_{p^{\alpha}}^{\ast\ast}(1\le i\le k)$ and
\begin{eqnarray}\label{eq1}x_1'+x_2'+\cdots+x_k'\equiv c \pmod {p^{\alpha}},\end{eqnarray}
 we have
$((x_1')_p,\ldots,(x_{k-1}')_p)\in S$, and so
$(x_1',\ldots,x_{k-1}')\in T$. Since $x_k'$ is unique if
$x_i'(1\le i\le k-1)$ are fixed in \eqref{eq1}, it follows that
$\varphi_k(p^{\alpha},c)\le |T|$.

On the other hand, for any $(x_1',x_2',\ldots,x_{k-1}')\in T$, we
have $$((x_1')_p,(x_2')_p,\ldots,(x_{k-1}')_p)\in S.$$ By the
definition of the set $S$, there exists a unique $x_k''\in
\mathbb{Z}_p^{\ast\ast}$ such that
\begin{eqnarray}\label{eq2}(x_1')_p+(x_2')_p+\cdots+(x_{k-1}')_p+x_k''\equiv c \pmod {p}.\end{eqnarray}
Since there exists a unique $x_k'\in \mathbb{Z}_{p^{\alpha}}$ such
that \eqref{eq1} holds, by \eqref{eq2}, we have $x_k'\equiv
x_k''\pmod p$ and so $x_k'\in \mathbb{Z}_{p^{\alpha}}^{\ast\ast}$.
Hence, for any $(x_1',x_2',\ldots,x_{k-1}')\in T$, there exists
$x_k'\in \mathbb{Z}_{p^{\alpha}}^{\ast\ast}$ such that \eqref{eq1}
holds, and so $\varphi_k(p^{\alpha},c)\ge |T|$.

Therefore,
$\varphi_k(p^{\alpha},c)=|T|=p^{(k-1)(\alpha-1)}\varphi_k(p,c)$.

\end{proof}

\begin{proof}[Proof of Theorem \ref{mainthm}.] We first calculate
$\varphi_k(p,c)$ for prime $p$ and integer $c$.
\begin{eqnarray*}\varphi_k(p,c)&=&\sharp\{(x_1,x_2,\ldots,x_k)\in (\mathbb{Z}_p^{\ast\ast})^k:~x_1+x_2+\cdots+x_k\equiv
c~(\text{mod}~p)\}\\
&=&\sum_{x_1=2}^{p-1}\sum_{x_2=2}^{p-1}\cdots\sum_{x_k=2}^{p-1}\frac
1p\cdot\sum_{t=0}^{p-1}e\left(\frac{(x_1+x_2+\cdots+x_k-c)t}{p}\right)\\
&=&\frac 1p
\sum_{t=0}^{p-1}\left(\sum_{x_1=2}^{p-1}e\left(\frac{x_1t}{p}\right)\right)\cdots\left(\sum_{x_k=2}^{p-1}
e\left(\frac{x_kt}{p}\right)\right)e\left(\frac{-ct}{p}\right)\\
&=&\frac
1p\left(\sum_{t=1}^{p-1}\left(-1-e\left(\frac{t}{p}\right)\right)^k
e\left(\frac{-ct}{p}\right)+(p-2)^k\right)\\
&=&\frac 1p\left((-1)^k\sum_{j=0}^k{k\choose
j}\sum_{t=1}^{p-1}e\left(\frac{t(j-c)}{p}\right)+(p-2)^k\right)\\
&=&\frac {(-1)^k}{p}\left(\sum_{\substack{j=0\\j\equiv
c~\text{mod}~p}}^k{k\choose
j}(p-1)-\left(\sum_{\substack{j=0\\j\not\equiv c~\text{mod}~p}}^k
{k \choose j}\right)+(2-p)^k\right)\\
&=&\frac {(-1)^k}{p}\left(p\sum_{\substack{j=0\\j\equiv c
~(\text{mod}~p)}}^k {k \choose j}+(2-p)^k-2^k\right).
\end{eqnarray*}
By Lemma \ref{lem1} and Lemma \ref{lem2}, we have
$$\varphi_k(n,c)=(-1)^k\prod_{p^{\alpha}\|n}p^{\alpha k-\alpha-k}\left(p\sum_{\substack{j=0\\j\equiv c
~(\text{mod}~p)}}^k {k \choose j}+(2-p)^k-2^k\right).$$

\end{proof}

\section{Acknowledgement} This work was supported by the National Natural Science Foundation
for Youth of China, Grant No. 11501299, the Natural Science
Foundation of Jiangsu Province, Grant Nos. BK20150889,~15KJB110014
and the Startup Foundation for Introducing Talent of NUIST, Grant
No. 2014r029.
%
%

\clearpage
\end{document}